\documentclass[11pt,reqno]{amsart}

\usepackage[dvipsnames]{xcolor}
\usepackage{tikz}
\usepackage{stmaryrd}
\usetikzlibrary{matrix,arrows,decorations.pathmorphing}
\usepackage{graphicx}
\usepackage{amssymb}
\usepackage{amsmath,mathtools}
\usepackage{fullpage}
\usepackage{caption}
\usepackage{amsthm}
\usepackage{mathrsfs}
\usepackage{thmtools}
\usepackage{blkarray}
\usepackage{multirow}
\usepackage{picinpar} 
\usepackage{tikz-cd}
\usepackage{color}
\usepackage{verbatim}
\usepackage{hyperref}
\hypersetup{colorlinks=true, linkcolor=red, citecolor = OliveGreen, urlcolor = black}
\usepackage{amssymb}
\usepackage{amsmath}
\usepackage{enumitem,colonequals}

\emergencystretch=1.5em
\usepackage{color}
\usepackage{mathrsfs}
\usepackage[all]{xy}
\setcounter{tocdepth}{1}
\usepackage{comment}
\usepackage{mathpazo}
\usepackage{euler}

\newcommand{\mC}{\mathbb{C}}
\newcommand{\mG}{\mathbb{G}}

\newcommand{\cO}{\mathcal{O}}

\newcommand{\wt}[1]{\widetilde{#1}}


\RequirePackage{mathrsfs} 

\numberwithin{equation}{subsection}
\newtheorem{thmx}{Theorem}

\newtheorem{corox}[thmx]{Corollary}

\newtheorem{theorem}{Theorem}[section]
\newtheorem{lemma}[theorem]{Lemma}
\newtheorem{proposition}[theorem]{Proposition}

\theoremstyle{definition}

\newtheorem{remark}[theorem]{Remark}

\newtheorem{example}[theorem]{Example}
\newtheorem{definition}[theorem]{Definition}

\numberwithin{equation}{section} \numberwithin{figure}{section}

\DeclareMathOperator{\an}{an}

\DeclareMathOperator{\Pic}{Pic}

\DeclareMathOperator{\Sp}{Sp}

\newcommand\QQ{\mathbb{Q}}

\newcommand\CC{\mathbb{C}}


\renewcommand{\geq}{\geqslant}

\title[Closed subvarieties of semi-abelian varieties]{Non-archimedean entire curves in closed subvarieties of semi-abelian varieties}
 
\author{Jackson S. Morrow} 
\address{Jackson S. Morrow \\
	Department of Mathematics\\
	Emory University\\
    Atlanta, GA 30322\\
    United States}
\email{jmorrow4692@gmail.com}

\subjclass[2010]
{32H20,  
	(32P05
)}  

\keywords{Semi-abelian varieties, hyperbolicity, Lang--Vojta conjecture, rigid analytic varieties, non-archimedean geometry, Albanese variety, $p$-adic uniformization}

\begin{document}

\begin{abstract}  
We prove a non-archimedean analogue of the fact that a closed subvariety of a semi-abelian variety is  hyperbolic modulo its special locus, and thereby generalize a result of Cherry.
\end{abstract}

\maketitle

\thispagestyle{empty}

\section{Introduction}  
  
The Green--Griffiths--Lang--Vojta conjectures predict that a quasi-projective variety $X$ over $\CC$ is of log-general type if and only if there is a proper closed subscheme $\Delta\subset X$ such that $X$ is Brody hyperbolic modulo $\Delta$ (i.e., every non-constant holomorphic map $\CC\to X(\mC)$ factors through $\Delta(\mC)$); see \cite{Br1, Br2, GrGr, Lang, VojtaLangExc, RousseauNotes}.  For example, this conjecture is known when  $X$  is a closed subvariety of an abelian variety by the celebrated theorem of Bloch--Ochiai--Kawamata \cite{Bloch26, Ochiai77, Kawamata}. We refer the reader to \cite{NWY1, NWY2} for more recent advances.  

  The aim of this paper is to investigate non-archimedean analogues of the Green--Griffiths--Lang--Vojta conjectures. Our starting point is the following theorem, which is the culmination of results in \cite{FaltingsLang1, FaltingsLang2, Abram, Vojta11, Nogu}. 
  The definitions of the notions appearing in the following theorem are  stated in \cite[p.~78]{LangComplexHyperbolic} and \cite[Definitions 7.1, 8.1]{JBook}. 

  \begin{theorem}[Abramovich, Faltings, Kawamata, Noguchi, Ueno, Vojta] \label{thm:starting_point}
  Let $X$ be a closed subvariety of a semi-abelian variety $G$ over $\CC$. Let $\Sp(X)$ be the union of the subvarieties of $X$ which are translates of positive-dimensional  closed subgroups of $G$. Then the following statements hold.
  \begin{enumerate}
  \item The subset $\Sp(X)$ is  Zariski closed in $X$.
  \item The variety $X$ is of log-general type if and only if $\Sp(X) \neq X$.
  \item The variety $X$ is arithmetically hyperbolic modulo $\Sp(X)$.
  \item The variety $X$ is Brody hyperbolic modulo $\Sp(X)$.
  \end{enumerate}
  \end{theorem}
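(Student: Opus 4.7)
The plan is to assemble the four assertions by combining the structure theory of subvarieties of semi-abelian varieties with the deep diophantine and value-distribution inputs attributed in the theorem.

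I would begin with (1) by introducing the connected translation stabilizer $B := \{g \in G : g + X = X\}^0$, which is an algebraic subgroup of $G$. If $\dim B > 0$ then $\Sp(X) = X$ and there is nothing to prove, so the interesting case is $B = 0$. Passing to the quotient $\pi \colon G \to G/B$, the image $X' := \pi(X)$ has trivial stabilizer, and the Ueno--Kawamata structure theorem, extended to the semi-abelian setting by Abramovich, shows that only finitely many maximal positive-dimensional subgroup translates lie inside $X'$; pulling back along $\pi$ then yields (1).

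Part (2) would follow from the same structure theorem: under the Ueno fibration $X \to X/B$, being of log-general type is equivalent to $\dim(X/B) = \dim X$ combined with the base having trivial stabilizer, which is in turn equivalent to $\Sp(X) \neq X$. Part (3) is Vojta's theorem on integral points of closed subvarieties of semi-abelian varieties, generalizing Faltings's theorem for abelian varieties, and I would simply invoke it. Part (4) is the Bloch--Ochiai--Kawamata theorem in the abelian case, with Noguchi's extension to the semi-abelian case: given an entire curve $f \colon \CC \to X \subset G$, compose with the (quasi-)Albanese map and apply Nevanlinna-theoretic arguments to force the Zariski closure of $f(\CC)$ to be a translate of a positive-dimensional subgroup, hence contained in $\Sp(X)$.

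The main technical obstacle is (4), which in any honest proof requires substantial analytic machinery: either a logarithmic second main theorem for semi-abelian varieties in the style of Noguchi--Winkelmann--Yamanoi, or a jet-differential argument of Kawamata--Noguchi type, together with careful induction on dimension to ensure that the limiting translate lies in $\Sp(X)$ rather than merely in some larger weakly special subvariety. Once (4) is granted, parts (1) and (2) are formal consequences of the stabilizer reduction, and (3) can be treated as a black box from the Faltings--Vojta literature.
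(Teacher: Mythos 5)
The paper offers no proof of this theorem: it is quoted purely as background, with the four parts attributed to the literature (\cite{FaltingsLang1, FaltingsLang2, Abram, Vojta11, Nogu}), so there is no internal argument to compare against. Your roadmap is consistent with those attributions and with the standard account: the stabilizer reduction and the Ueno--Kawamata structure theory (extended to semi-abelian varieties by Abramovich) for (1) and (2), Faltings--Vojta for (3), and Bloch--Ochiai--Kawamata plus Noguchi (and Noguchi--Winkelmann--Yamanoi) for (4). One imprecision worth fixing in your sketch of (1): even after killing the stabilizer, it is not true that only finitely many maximal positive-dimensional subgroup translates lie in $X'$; a subvariety with trivial stabilizer can still contain a positive-dimensional family of translates of a fixed subgroup. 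The correct statement of the structure theorem is that only finitely many subgroups $B_1,\dots,B_k$ occur as the groups of maximal translates, and for each $i$ the locus $Z_i = \{x : x + B_i \subseteq X'\}$ is Zariski closed, so that $\Sp(X') = \bigcup_i (Z_i + B_i)$ is closed. With that correction your outline is the standard one, and, like the paper, it ultimately treats the hard analytic and diophantine content of (3) and (4) as black boxes from the cited sources.
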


In \cite{Cherry, CherryKoba, CherryRu, AnCherryWang, LinWang, LevinWang}, the authors investigate   possible non-archimedean analogues of the Green--Griffiths--Lang conjecture; however, some of their results contrast the complex analytic setting. Inspired by Cherry's work, the authors of \cite{JVez} formulated the ``correct" analogue of the Green--Griffiths--Lang conjecture for projective varieties over a non-archimedean valued field $K$.

Our main result is the non-archimedean analogue of the statements $(2), (3)$, and $(4)$ in Theorem \ref{thm:starting_point}. We refer the reader to Section \ref{section:defs} for the definition of a $K$-analytic Brody hyperbolic variety.

\begin{thmx}\label{thm:GGLSemiAb}
Let $K$ be an algebraically closed complete non-archimedean valued field of characteristic zero. 
Let $X$ be a closed subvariety of a semi-abelian variety $G$ over $K$. Then $X$ is  $K$-analytically Brody hyperbolic modulo $\Sp(X)$.
\end{thmx}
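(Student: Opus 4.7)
The plan is to reduce the theorem to the following rigidity statement, which should be thought of as a semi-abelian generalisation of Cherry's theorem: every $K$-analytic morphism $f \colon \Gm^{\an} \to G^{\an}$ is algebraic, i.e.\ comes from a morphism of algebraic varieties $\Gm \to G$. Granting this rigidity, let $f \colon \Gm^{\an} \to X^{\an}$ be a non-constant test map; composing with $X \hookrightarrow G$ I view it as an analytic map into $G^{\an}$, and algebraicity promotes it to an algebraic morphism $\wt f \colon \Gm \to G$. After an appropriate translation, $\wt f$ is a homomorphism of algebraic groups, and its image is therefore a translate of a positive-dimensional algebraic subgroup $H \subseteq G$. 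Since the Zariski closure of $f(\Gm^{\an})$ equals the image of $\wt f$ and is contained in $X$, this translate of $H$ lies in $X$; by definition of $\Sp(X)$ the image of $f$ lies in $\Sp(X)^{\an}$, which gives Brody hyperbolicity modulo $\Sp(X)$.

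To establish the rigidity, I would decompose $G$ via its Chevalley sequence $0 \to T_G \to G \to A \to 0$, with $T_G$ a torus and $A$ an abelian variety. The composition $\pi \circ f \colon \Gm^{\an} \to A^{\an}$ is to be shown algebraic using the $p$-adic uniformization of $A$ due to Raynaud and Bosch--L\"utkebohmert: $A^{\an} \cong \wt A^{\an}/M$, where $\wt A$ is an extension of an abelian variety $B$ of good reduction by another torus and $M \subset \wt A^{\an}$ is a discrete multiplicative lattice. Cherry's original theorem forces any $K$-analytic morphism $\Gm^{\an} \to B^{\an}$ to be constant; combined with a careful analysis through the uniformization $\wt A^{\an} \to A^{\an}$, this should yield that $\pi \circ f$ takes values in a translate of the toric part of $\wt A$ projected to $A^{\an}$, and hence is algebraic with image a translate of a subtorus of $A$. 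Pulling back, $f$ factors through the preimage of this subtorus in $G$, which is itself a semi-abelian variety and in whose structure the only remaining analytic freedom is a $K$-analytic map $\Gm^{\an} \to \Gm^{d,\an}$. Such a map is automatically algebraic, since the non-vanishing rigid analytic functions on $\Gm^{\an}$ are precisely the Laurent monomials $c z^n$ with $c \in K^\times$ and $n \in \ZZ$; this closes the argument.

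The principal obstacle is the passage from Cherry's constancy on $B^{\an}$ to the algebraicity of $\Gm^{\an} \to A^{\an}$, because $\Gm^{\an}$ is not simply connected in the Berkovich sense --- indeed, the Tate uniformization of a multiplicatively reduced elliptic curve is itself realised as a surjective analytic map $\Gm^{\an} \to E^{\an}$, so $f$ need not abstractly lift through $\wt A^{\an} \to A^{\an}$. I would circumvent this by pulling back the invariant $1$-forms on $A$: the resulting global holomorphic $1$-forms on $\Gm^{\an}$ have very rigid Laurent expansions, and the interaction of Cherry's constancy on the good-reduction quotient $B$ with the lattice $M$ should be enough to pin down the image of $f$ in the toric direction of $\wt A$. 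This step, which genuinely extends Cherry's theorem from abelian to semi-abelian targets, is expected to be the technical heart of the proof.
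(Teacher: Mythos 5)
There are genuine gaps here, and the central one is that your reduction rests on a false statement. It is \emph{not} true that every $K$-analytic morphism $\Gm^{\an}\to G^{\an}$ is algebraic: for an elliptic curve $E$ with multiplicative reduction, the Tate uniformization $\Gm^{\an}\to E^{\an}$ is a non-constant, non-algebraic analytic map to a (semi-)abelian variety (an algebraic map $\Gm\to E$ would extend to $\PP^1\to E$ and be constant). The paper even records a counterexample to your rigidity claim in Example 3.7: for $G=\Gm\times E$, the map $z\mapsto (z,\phi(z))$ has image that is not an algebraic subgroup, although its Zariski closure is all of $G^{\an}$. The correct and provable statement --- and the one the paper actually establishes in Proposition \ref{prop:ZCAlgebraicGroup} --- is only that the \emph{Zariski closure} of $\phi(\Gm^{\an})$ is the analytification of a translate of an algebraic subgroup of $G$; that weaker conclusion still suffices for the theorem, since $\Sp(X)$ is defined via Zariski-closed translates of subgroups contained in $X$. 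Relatedly, your claim that $\pi\circ f$ has image ``a translate of a subtorus of $A$'' cannot be right: an abelian variety contains no nontrivial subtori, so this would force $\pi\circ f$ to be constant, contradicting the Tate example.

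Your identified ``principal obstacle'' is also backwards: $\Gm^{\an}$ \emph{is} simply connected as a Berkovich space (it deformation retracts onto its skeleton), and the Tate map $\Gm^{\an}\to E^{\an}$ is precisely the topological universal cover of $E^{\an}$ --- it witnesses that $E^{\an}$ is not simply connected, not that $\Gm^{\an}$ isn't. Consequently $f$ \emph{does} lift through the universal cover of $G^{\an}$, and this lift is the engine of the paper's argument: one lifts $\phi$ to $\widetilde{G}$, uses the Bosch--L\"utkebohmert uniformization of $A^{\an}$ together with Cherry's constancy theorem for good-reduction abelian varieties to push the lifted image into the toric part, invokes the algebraicity of analytic homomorphisms between split tori, and splits the resulting extension of analytic tori using the triviality of $\Pic$ of $\Gm^{r,\an}$. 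Your proposed workaround via pulled-back invariant $1$-forms is left entirely unexecuted and is not needed. Finally, you address only the $\Gm^{\an}$ test maps; the definition of $K$-analytic Brody hyperbolicity modulo $\Sp(X)$ also requires that every non-constant morphism $U\to X$ from a big open subset $U$ of an abelian variety factor over $\Sp(X)$. The paper disposes of this via Mochizuki's extension lemma (to extend $U\to G$ to $A\to G$) and Iitaka's theorem that morphisms of semi-abelian varieties are translates of homomorphisms; your proposal is silent on this half of the statement.
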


This result is proven by  Cherry \cite[Theorem 3.5]{Cherry} when $G$ is an abelian variety (so that $X$ is projective). Our line of reasoning to prove  Theorem \ref{thm:GGLSemiAb} resembles Cherry's in that we study analytic morphisms from tori to semi-abelian varieties (see Section \ref{section:extension}).

A direct consequence of Theorem \ref{thm:GGLSemiAb} is the following characterization of groupless (\cite[Definition 2.1]{JKam}) closed subvarieties of a semi-abelian variety.
\begin{corox}
Let $K$ be an algebraically closed complete non-archimedean valued field of characteristic zero, and let $X$ be a closed subvariety of a semi-abelian variety $G$ over $K$.
Then  $X$ is groupless (i.e., does not contain the translate of a positive-dimensional closed subgroup of $G$) if and only if $X$ is $K$-analytically Brody hyperbolic. 
\end{corox}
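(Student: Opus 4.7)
The plan is to deduce the corollary directly from Theorem \ref{thm:GGLSemiAb} by unwinding definitions. The key observation is that grouplessness is exactly the vanishing of the special locus: indeed, $\Sp(X)$ is defined as the union of subvarieties of $X$ that are translates of positive-dimensional closed subgroups of $G$, so $X$ is groupless if and only if $\Sp(X) = \emptyset$.

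For the forward direction, I would assume $X$ is groupless, so that $\Sp(X) = \emptyset$. Theorem \ref{thm:GGLSemiAb} then yields that $X$ is $K$-analytically Brody hyperbolic modulo the empty set, which is by definition the same as $X$ being $K$-analytically Brody hyperbolic.

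For the converse I would argue by contrapositive. Suppose $X$ contains a translate $g \cdot H$ of a positive-dimensional closed subgroup $H \subseteq G$. Using the standard structure exact sequence $1 \to T \to H \to A \to 1$, with $T$ a torus and $A$ an abelian variety, I would exhibit in each case a non-constant analytic morphism witnessing that $X$ fails to be $K$-analytically Brody hyperbolic. When $\dim T > 0$, any one-parameter subgroup embedding $\Gm \hookrightarrow T$ composes with $T \hookrightarrow H$ and the translation $H \xrightarrow{\cdot g} X$ to yield a non-constant $K$-analytic map $\Gm^{\an} \to X^{\an}$. When $T = 0$, so that $H = A$ is a positive-dimensional abelian variety, the translation $A \xrightarrow{\cdot g} X$ is itself a non-constant $K$-analytic map from an abelian variety to $X^{\an}$. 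Each case contradicts the definition of $K$-analytic Brody hyperbolicity from Section \ref{section:defs}, which forbids non-constant analytic maps from both $\Gm^{\an}$ and from abelian varieties.

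The only substantive input is Theorem \ref{thm:GGLSemiAb} itself; granted that theorem, no real obstacle remains, since the corollary is essentially a formal consequence.
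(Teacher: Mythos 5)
Your proposal is correct and is essentially the argument the paper intends (the paper states the corollary as a direct consequence of Theorem \ref{thm:GGLSemiAb} without writing out a proof). One small imprecision: in the forward direction, ``$K$-analytically Brody hyperbolic modulo $\emptyset$'' is \emph{not} literally the same as ``$K$-analytically Brody hyperbolic'' by definition, since the former tests morphisms from big open subsets $U\subset A$ rather than from $A$ itself; the equivalence for closed subschemes of semi-abelian varieties is the content of Remark \ref{remark:mochi}, which uses Mochizuki's extension lemma and should be cited at that step.
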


\subsection*{Acknowledgments}
The author would like to thank Ariyan Javanpeykar for suggesting the problem, for many helpful conversations, and for sending a preliminary version of \cite{JBook}, which helped in writing Section \ref{section:defs}. 
The author extends his thanks to Alberto Vezzani and David Zureick-Brown for useful discussions.
The author also thanks Lea Beneish for comments on an earlier draft.

\subsection*{Conventions}
Throughout, $K$ denotes an algebraically closed complete non-archimedean valued field of characteristic zero. 
For a locally of finite type scheme $X/K$, we will use $X^{\an}$ to denote the rigid analytic space (in the sense of Tate \cite{TateRigid}) or the adic space (in the sense of Huber \cite{huber2}) associated to $X$, unless otherwise stated.

\section{Non-archimedean hyperbolicity of pairs}\label{section:defs}
In this section, we extend the notion of $K$-analytic Brody hyperbolicity of varieties introduced in \cite{JVez}.
First, we recall their notion.

\begin{definition}[\protect{\cite[Definition 2.3, Lemma 2.14, Lemma 2.15]{JVez}}]
Let $X$ be a finite type separated scheme over $K$. Then $X$ is \emph{$K$-analytically Brody hyperbolic} if
\begin{itemize}
\item every analytic morphism $\mathbb{G}_{m,K}^{\an} \to X^{\an}$ is constant, and 
\item for every abelian variety $A$ over $K$, every   morphism $A\to X$ is constant.
\end{itemize}
\end{definition}

We now define what it means for a pair $(X,D)$ to be hyperbolic. Our proposed definition reads as follows.

\begin{definition} \label{defn1}
Let $X$ be a finite type separated scheme  over $K$  and let $D\subset X$ be a closed subscheme. Then $X$ is \emph{$K$-analytically Brody hyperbolic modulo $D$} (or:~\emph{the pair $(X,D)$ is $K$-analytically Brody hyperbolic)} if
\begin{itemize}
\item every non-constant analytic morphism $\mathbb{G}_{m,K}^{\an} \to X^{\an}$ factors over $D^{\an}$, and
\item for every abelian variety $A$ over $K$ and every dense open subset $U\subset A$ with $\mathrm{codim}(A\setminus U)\geq 2$, every non-constant morphism $U \to X$ of schemes factors over $D$. 
\end{itemize}
\end{definition}

With this definition, it is not hard to see that a proper scheme $X$ over $K$ is $K$-analytically Brody hyperbolic if and only if $(X,\emptyset)$ is $K$-analytically Brody hyperbolic (i.e., $X$ is $K$-analytically Brody hyperbolic modulo the empty set). Indeed, if $X$ is $K$-analytically Brody hyperbolic and proper, then $X$ has no rational curves. In particular, every   rational map $A\dashrightarrow X$ with $A$ an abelian variety 
extends (uniquely) to a morphism $A\to X$, and such morphisms are constant if $X$ is $K$-analytically Brody hyperbolic.

Similarly, one can show that a closed subscheme $X$ of a semi-abelian variety is $K$-analytically Brody hyperbolic if and only if $(X,\emptyset)$ is $K$-analytically Brody hyperbolic; see Remark \ref{remark:mochi}.

\begin{remark} 
The reader might find the condition in Definition \ref{defn1} on the codimension of the complement of $U$ in $A$ unnatural. We now explain why this condition is necessary (assuming one wants to define the ``right'' notion of hyperbolicity).
Note that Vojta has already made the observation that one has to test hyperbolicity on ``big'' open subsets of algebraic groups and not merely on algebraic groups; see \cite[Definition 2.2]{VojtaLangExc} and also \cite[\S 6]{JBook}.

The example to keep in mind is the blow-up $X$ of a simple abelian surface $A$ at the origin over $\QQ$. It is not hard to see that $X(\CC)$ admits a dense entire curve, and is therefore as far as possible from being  Brody hyperbolic (in the usual complex-analytic sense).

Let $p$ be a prime of good reduction of $A$, and consider the smooth projective variety $X_{\CC_p}$ over $\CC_p$.  Let $\Delta$ be the exceptional locus of $X_{\CC_p}\to A_{\CC_p}$. Then, every non-constant morphism from $\mathbb{G}_m^{\an}\to X_{\CC_p}^{\an}$ factors over $\Delta^{\an}$. Moreover, by rigid analytic GAGA \cite{kopfGAGA} and the simplicity of $A_{\CC_p}$, for every abelian variety $B$ over $\CC_p$, every morphism $B^{\an}\to X_{\CC_p}^{\an}$ is constant. Thus, if one does not ``test'' the hyperbolicity on big opens of abelian varieties, the variety $X_{\CC_p}$ would be $\CC_p$-analytically Brody hyperbolic modulo $\Delta$ (contrary to it being very far from being Brody hyperbolic over $\CC$).
\end{remark}

\begin{remark} \label{remark:mochi}
Let $A/K$ be an abelian variety and let $G/K$ be a semi-abelian variety. 
By  \cite[Lemma A.2]{MochizukiAbsoluteAnabelian},  for every dense open subset $U\subset A$ with $\mathrm{codim}(A\setminus U)\geq 2$, we have that any morphism $U\to G$ extends uniquely to a morphism $A\to G$. 
Using this result, we immediately have that a closed subscheme $X$ of $G$ is $K$-analytically Brody hyperbolic if and only if $X$ is $K$-analytically Brody modulo $\emptyset$.
\end{remark}

\begin{definition}
A finite type separated scheme $X$ over $K$ is \emph{pseudo-$K$-analytically Brody hyperbolic} if there is a proper closed subset $D \subsetneq X$ of $X$ such that $(X,D)$ is $K$-analytically Brody hyperbolic. 
\end{definition}

\section{Analytic maps from tori to semi-abelian varieties}\label{section:extension}
Let $G$ be a semi-abelian variety over $K$. 
Since $G$ is semi-abelian, there is a split torus $T_1 \subset G$, an abelian variety $A$ over $K$, and a short exact sequence of commutative group schemes
\[
0\to T_1\to G\to A\to 0.
\]

Our goal is to prove that, if   $\phi\colon \mathbb{G}_m^{\an}\to G^{\an}$ is a morphism, then the Zariski closure of its image is the translate of the analytification of an \textit{algebraic} subgroup of $G$; see Proposition \ref{prop:ZCAlgebraicGroup} for a precise statement.

\begin{remark}
In this section, for a locally of finite type scheme $X/K$, we will use $X^{\an}$ to denote the associated $K$-analytic space (in the sense of Berkovich \cite{BerkovichSpectral}). 
We do so in order to use techniques from topology to study analytic maps from tori to semi-abelian varieties.
In particular, an analytic torus is simply-connected \cite[Section 6.3]{BerkovichSpectral}, and a famous result of Berkovich \cite[Corollary 9.5]{BerkovichUniversalCover} states that a smooth, connected, Hausdorff strictly $K$-analytic space has a universal covering which is a Hausdorff, simply connected strictly $K$-analytic space.

We can relate our results concerning Berkovich spaces to adic spaces using the equivalence between the category of Hausdorff strictly $K$-analytic spaces and the category of taut locally of finite type adic spaces \cite[Proposition 8.3.7]{huber}.
\end{remark}

We start by recalling that line bundles on analytifications of tori are trivial.

\begin{lemma}\label{lemma:BerkrigidPic}
Let $X$ be a separated, good, strictly $K$-analytic space, and let $X_0$ denote the associated rigid analytic space. Then $\Pic(X) \cong \Pic(X_0)$.
\end{lemma}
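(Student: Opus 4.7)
The plan is to realize both sides as Čech first cohomology of the units in the structure sheaf on essentially the same site, and then to invoke Berkovich's comparison between the G-topology on a strictly $K$-analytic space and the admissible topology on the associated rigid analytic space. Equip $X$ with its G-topology $X_G$, whose admissible opens are the analytic subdomains and whose admissible covers are those in the sense of Berkovich. The goodness hypothesis ensures that every point of $X$ has an affinoid neighbourhood, so $\mathcal{O}_X$ makes sense on $X_G$ and any locally free $\mathcal{O}_X$-module of rank one trivializes on a G-admissible cover by affinoids.

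First, I would identify $\Pic(X)$ with $\check{H}^1(X_G,\mathcal{O}_X^\times)$: since any line bundle on $X$ trivializes on an affinoid G-cover, classical cocycle calculus realizes line bundles up to isomorphism as classes in $\check{H}^1(X_G,\mathcal{O}_X^\times)$. The analogous statement $\Pic(X_0)=\check{H}^1(X_0,\mathcal{O}_{X_0}^\times)$ is the usual rigid analytic description of the Picard group. Next, I would appeal to the comparison result of Berkovich (see \cite[\S1.6]{BerkovichSpectral}) which asserts that for a Hausdorff strictly $K$-analytic space, the ringed site $X_G$ is equivalent to the admissible site underlying $X_0$; under this equivalence the affinoid domains on each side, and in particular their structure sheaves and the corresponding sheaves of units, are identified. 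Applying this equivalence to $H^1(-,\mathcal{O}^\times)$ yields the desired isomorphism $\Pic(X)\cong\Pic(X_0)$.

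The main technical point to verify is that line bundles on the Berkovich space $X$ (as locally free rank one $\mathcal{O}_X$-modules in the Berkovich topology) are genuinely classified by G-topological cohomology rather than by cohomology in the finer Berkovich topology; this is where goodness enters, since it provides the affinoid neighbourhoods one needs to pass from local trivializations in the Berkovich topology to G-admissible trivializing covers. Once that is in place, the rest of the argument is purely formal, and the result is essentially a translation of Berkovich's equivalence of sites into the language of Picard groups.
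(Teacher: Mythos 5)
Your argument is correct and is essentially the paper's own proof: the paper cites Berkovich's comparison of coherent sheaves on the G-topology of a (paracompact) strictly $K$-analytic space with those on the associated rigid space, together with the fact that goodness lets one pass between the Berkovich topology and the G-topology — exactly the two inputs you isolate. Your Čech-cocycle packaging of the equivalence of ringed sites is just a more explicit phrasing of the same comparison.
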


\begin{proof}
This follows from \cite[Corollary 1.3.5]{BerkovichEtaleCohomology} and the bottom of \textit{loc.~cit.~}p.~37.
\end{proof}

\begin{lemma}\label{thm:PicTAnTrivial} 
If $L$ is a line bundle on a split torus $\mathbb{G}_{m,K}^{r,\an}$, then $L$ is trivial.  
\end{lemma}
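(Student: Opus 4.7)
The plan is to reduce, via Lemma \ref{lemma:BerkrigidPic}, to computing the Picard group of the rigid analytic torus $T \coloneqq \mathbb{G}_{m,K}^{r,\an}$ in the rigid analytic category, and then to exploit its Stein-like exhaustion by polyannuli.

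First, I would fix some $\rho \in |K^\times|$ with $\rho > 1$ and exhaust $T$ by the increasing admissible affinoid cover
\[
U_n = \{x \in T : \rho^{-n} \leq |x_i| \leq \rho^n \text{ for all } i = 1, \ldots, r\}.
\]
Each $U_n$ is an affinoid polyannulus whose ring of analytic functions is a Laurent variant of a Tate algebra, which is well known to be a unique factorization domain; in particular $\Pic(U_n) = 0$.

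Given a line bundle $L$ on $T$, one therefore obtains trivializing sections $s_n$ of $L|_{U_n}$, and the comparison data $s_{n+1}/s_n$ gives a cocycle in $\mathcal{O}^\times(U_n)$ which must be trivialized compatibly. The group of units on $U_n$ admits the normal form $K^\times \cdot \{x_1^{a_1} \cdots x_r^{a_r}\}_{(a_1,\ldots,a_r) \in \mathbb{Z}^r} \cdot (1 + J_n)$, where $J_n$ consists of analytic functions $f$ with spectral norm $\|f\|_{U_n} < 1$. The constant and monomial parts of each cocycle value extend as global units on $T$ (since $x_i$ is a unit everywhere) and can therefore be absorbed into the trivializations. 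Triviality of the remaining principal-unit cocycle then follows from a Mittag-Leffler type argument: the projective system $\{1 + J_n\}_n$ has transition maps with dense image in the spectral topology, so the associated $\varprojlim^1$ vanishes, producing a compatible trivialization of $L$.

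The main obstacle is this last gluing / Mittag-Leffler step, which amounts to the non-archimedean Stein phenomenon for the sheaf of units on the exhausting system of polyannuli; modulo this technicality, the proof is a direct computation using factoriality of the Tate Laurent algebra and the explicit description of its units.
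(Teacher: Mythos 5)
Your reduction to the rigid category via Lemma \ref{lemma:BerkrigidPic} is exactly the paper's first step; the difference is that the paper then simply cites \cite[Theorem 6.3.3.(2)]{FresnelVDPutRigidAnalytic} for the triviality of $\Pic(\mathbb{G}_{m,K}^{r,\an})$ in the rigid setting, whereas you reprove that statement directly. Your argument is essentially the standard proof of the cited result: exhaust the torus by affinoid polyannuli $U_n$, use $\Pic(U_n)=0$, write $\mathcal{O}^\times(U_n)\cong K^\times\times\mathbb{Z}^r\times(1+J_n)$, absorb the constant and monomial parts into global units, and kill the principal-unit obstruction by the topological Mittag--Leffler theorem ($\varprojlim^1$ vanishes for an inverse system of complete topological groups with dense-image transition maps; density here comes from density of Laurent polynomials). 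This is correct, and what it buys you is a self-contained proof in place of a black-box citation. Two points deserve care if you write it out. First, the identification $\Pic(T)\cong\varprojlim^1\mathcal{O}^\times(U_n)$ should be justified via the exact sequence for cohomology of an increasing admissible exhaustion (using $\Pic(U_n)=0$). Second, your assertion that the ring of the thick polyannulus $\{\rho^{-n}\le|x_i|\le\rho^n\}$ is a UFD is not obvious for $r\ge 2$ (completed tensor products of factorial affinoids need not be factorial in general); what you actually need is only $\Pic(U_n)=0$, which is a theorem of Gerritzen--Bartenwerfer on Picard groups of polytopic (rational) domains and is also contained in the Fresnel--van der Put reference --- so either cite that directly or restrict to the ``unit'' Laurent--Tate algebra case where factoriality is classical. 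With those two references supplied, your proof is complete and, apart from being longer, interchangeable with the paper's.
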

\begin{proof}
Since the Berkovich analytification of $\mG_{m,K}^r$ is a separated, good, strictly $K$-analytic space, our result follows from \cite[Theorem 6.3.3.(2)]{FresnelVDPutRigidAnalytic} and Lemma \ref{lemma:BerkrigidPic}.
\end{proof}

\begin{lemma}\label{prop:ZCinTorus}
Let $\phi\colon \mathbb{G}_m^{\an}\to G^{\an}$ be a morphism, and let $\wt{\phi}\colon \mathbb{G}_m^{\an} \to \wt{G}$ be a lift of this morphism to the universal cover $\wt{G}$ of $G^{\an}$. Then, the image $\wt{\phi}(\mathbb{G}_m^{\an})$ is contained inside a split torus $T^{\an}$ of $\wt{G}$. 
\end{lemma}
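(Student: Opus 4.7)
The plan is to use non-archimedean (Raynaud--Bosch--L\"utkebohmert) uniformization to realize $\wt{G}$ as the analytification of a semi-abelian variety $\wt{E}$ that sits in a short exact sequence
\[
0 \to \wt{T} \to \wt{E} \to B \to 0,
\]
with $\wt{T}$ a split torus and $B$ an abelian variety with \emph{good} reduction, and then to reduce the statement to the fact that an abelian variety with good reduction admits no non-constant analytic morphism from $\mathbb{G}_m^{\an}$.

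Concretely, I would first invoke the non-archimedean uniformization of $G$ to produce such an $\wt{E}$ together with a discrete lattice $M\subset \wt{E}^{\an}(K)$ satisfying $G^{\an}=\wt{E}^{\an}/M$. To identify $\wt{E}^{\an}$ with $\wt{G}$, I would verify that $\wt{E}^{\an}$ is simply connected: the split torus analytification is simply connected (by the reference in the remark above), an abelian variety with good reduction is simply connected in the Berkovich framework (its canonical skeleton is a point), and an extension of simply connected analytic groups is simply connected. Since $\wt{E}^{\an}\to G^{\an}$ is an \'etale covering with discrete fiber $M$, it is therefore the universal cover.

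With this structure in hand, the second step is to compose $\wt{\phi}$ with the projection $\pi\colon \wt{E}^{\an}\to B^{\an}$ to obtain $\psi := \pi\circ\wt{\phi}\colon \mathbb{G}_m^{\an}\to B^{\an}$, and to show that $\psi$ is constant. This is precisely Cherry's theorem that an abelian variety with good reduction is $K$-analytically Brody hyperbolic (the essential analytic input underlying \cite[Theorem 3.5]{Cherry}): in particular any analytic morphism $\mathbb{G}_m^{\an}\to B^{\an}$ is constant. Consequently, $\wt{\phi}(\mathbb{G}_m^{\an})$ is contained in a single fiber of $\pi$. Each such fiber is a $\wt{T}$-torsor over $\Spec K$, and since $K$ is algebraically closed (so that $H^1(K,\wt{T})$ vanishes by Hilbert 90 for split tori), this torsor is trivial. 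Translating by any of its $K$-points then identifies the fiber with the split torus $\wt{T}^{\an}$, yielding the desired split torus $T^{\an}$ of $\wt{G}$ containing $\wt{\phi}(\mathbb{G}_m^{\an})$.

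The main obstacle is the first step, namely identifying $\wt{G}$ with $\wt{E}^{\an}$ for a semi-abelian $\wt{E}$ whose abelian quotient has good reduction; this requires the non-archimedean uniformization of semi-abelian varieties (extending Raynaud's uniformization from the abelian to the semi-abelian setting) together with the simple connectedness of good-reduction abelian varieties in Berkovich's sense. Once that structural picture is established, Cherry's rigidity theorem and the triviality of torus torsors over an algebraically closed field close out the argument.
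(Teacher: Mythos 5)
Your proof is correct in outline and rests on the same two pillars as the paper's argument --- Raynaud-type uniformization and Cherry's theorem that an abelian variety with good reduction admits no non-constant analytic map from $\mathbb{G}_m^{\an}$ --- but it packages the uniformization step differently. The paper applies \cite[Uniformization Theorem 8.8]{BLII} only to the abelian quotient $A$, obtaining $\widetilde{A}\cong H^{\an}$ with $0\to T_2\to H\to B\to 0$ and $B$ of good reduction, and then assembles the relevant piece of $\widetilde{G}$ by hand: it lifts the simply connected $T_1^{\an}$ into $\widetilde{G}$, maps it into $T_2^{\an}$ using Cherry's algebraicity of analytic maps between tori, and splits the resulting extension $0\to T_1^{\an}\to F\to T_3^{\an}\to 0$ via the triviality of $\Pic(\mathbb{G}_{m,K}^{r,\an})$ (Lemma \ref{thm:PicTAnTrivial}) to conclude that $F$ is a split torus. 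You instead uniformize the semi-abelian variety $G$ itself, so the algebraic structure of $\widetilde{E}$ gives you the split torus for free; the cost is (a) that the semi-abelian form of uniformization must be cited (it is available in the Bosch--L\"utkebohmert/Raynaud-extension literature, but it is a stronger statement than what the paper invokes, and it is exactly where the extension-splitting issue the paper handles with Lemma \ref{thm:PicTAnTrivial} gets absorbed), and (b) that you must verify $\widetilde{E}^{\an}$ is simply connected --- your argument is fine, since a torsor under a split torus is Zariski-locally trivial, so $\widetilde{E}^{\an}\to B^{\an}$ is a topological fiber bundle with contractible fiber and contractible base. Your endgame (constancy of $\pi\circ\wt{\phi}$ by Cherry, then triviality of the fiber as a $\widetilde{T}$-torsor over the algebraically closed field $K$) is cleaner than the paper's. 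One caveat shared by both arguments: what is actually produced is a \emph{translate} of a split torus (the paper writes ``up to translation''), so strictly speaking one should first normalize $\wt{\phi}$ by $\wt{\phi}(1)^{-1}$; this is harmless for the later applications but worth making explicit.
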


\begin{proof}
Let $\widetilde{A}$ be the universal covering of $A^{\an}$. By \cite[Uniformization Theorem 8.8]{BLII}, there is  a semi-abelian variety $H$ over $K$ with $\widetilde{A} \cong H^{\an}$, an abelian variety $B$ over $K$ with good reduction over $\cO_K$, a split torus $T_2\subset H$, and a short exact sequence of commutative group schemes
\[
0\to T_2\to H\to B\to 0.
\]  
Let $\widetilde{G}$ be the universal covering space of $G^{\an}$. Note that there is a   structure of a commutative Berkovich analytic group on $\widetilde{G}$  which makes $\widetilde{G}\to G^{\an}$ into a homomorphism.  By the universal property of universal covering spaces, the surjective homomorphism $G^{\an}\to A^{\an}$ lifts uniquely to a homomorphism $\widetilde{G}\to H^{\an}$.  

The image of $\mathbb{G}_m^{\an}\to \wt{G}\to H^{\an}$ is contained in $T_2^{\an}$. 
Indeed, the morphism $\mathbb{G}_m^{\an}\to H^{\an} \to B^{\an}$ is constant, since $B^{\an}$ has good reduction \cite[Theorem 3.2]{Cherry}, and so the image of $\mathbb{G}_m^{\an}$ in $H^{\an}$ lands inside its torus $T_2^{\an}$ (up to translation).

Since $T_1^{\an}$ is simply-connected \cite[Section 6.3]{BerkovichSpectral}, the subgroup $T_1^{\an}\subset G^{\an}$ lifts to a subgroup $T_1^{\an}\subset \widetilde{G}$.  Note that the homomorphism $T_1^{\an}\to H^{\an}$  factors over $T_{2}^{\an}$, and that the morphism $T_1^{\an}\to T_2^{\an}$ is algebraic \cite[Proposition 3.4]{Cherry}, i.e., the analytification of some morphism $T_1\to T_2$. Let $T_3$ be the image of this morphism, which is again a split torus.

Let $F$ be the inverse image of $T_3^{\an} \subset H^{\an}$ in $\widetilde{G}$. Note that $F$ is a closed subgroup of $\widetilde{G}$  and that the kernel of the homomorphism $F\to T_3^{\an}$ equals $T_1^{\an}$. Thus, there is a short exact sequence of rigid analytic groups
\[ 0\to T_1^{\an}\to F\to T_3^{\an}\to 0.\] 
By Lemma \ref{thm:PicTAnTrivial}, the above sequence splits, and so $F$ is the analytification of a split torus $T$. This shows that the image $\wt{\phi}(\mathbb{G}_m^{\an})$ is contained inside the split torus $T^{\an}$, as required. 
\end{proof}

\begin{lemma}\label{prop:absimple}
Let $\phi\colon \mathbb{G}_m^{\an}\to G^{\an}$ be a morphism. Suppose that the image of $\mathbb{G}_m^{\an}\to G^{\an} \to A^{\an}$ is Zariski dense. Then, the image of $\phi$ is Zariski dense in $G^{\an}$.
\end{lemma}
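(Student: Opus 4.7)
The plan is to lift $\phi$ to the universal cover $\wt{G}$ of $G^{\an}$ and reduce to a statement about the Zariski closure of the image of an algebraic morphism of tori, building on the preceding lemma and Cherry's algebraicity result.

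After replacing $\phi$ by $z\mapsto \phi(z)-\phi(1)$, I may assume $\phi(1)$ equals the identity of $G$. Then fix a lift $\wt\phi\colon \mathbb{G}_m^{\an}\to\wt{G}$ with $\wt\phi(1)$ the identity of $\wt{G}$. By Lemma~\ref{prop:ZCinTorus}, the image of $\wt\phi$ lies in a split algebraic torus $T\subset \wt{G}$ fitting in an exact sequence $0\to T_1^{\an}\to T^{\an}\to T_3^{\an}\to 0$ inherited from the uniformization $\wt{G}\to H^{\an}$. By \cite[Proposition~3.4]{Cherry}, $\wt\phi\colon \mathbb{G}_m^{\an}\to T^{\an}$ is the analytification of an algebraic morphism of tori, and by our normalization it is in fact a homomorphism; its image is therefore a subtorus $T'\subset T$.

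The restriction $\tau\colon T^{\an}\to G^{\an}$ of the covering map is an analytic group homomorphism, so $\phi=\tau\circ\wt\phi$ is as well. Consequently $\phi(\mathbb{G}_m^{\an})$ is an analytic subgroup of $G^{\an}$, and its Zariski closure $Z\subset G$ is a closed algebraic subgroup. Since the composition $\pi\circ\phi$ has Zariski-dense image in $A^{\an}$, one immediately gets $\pi(Z)=A$; tracing this hypothesis through the uniformization $H^{\an}\to A^{\an}$ also forces the image of $T'$ under $T\to T_3$ to be Zariski dense in $T_3$ and, being itself a subtorus, to equal $T_3$. Since $T_1$ is the kernel of $T\to T_3$, this yields $T_1+T'=T$ inside~$T$.

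To conclude that $Z=G$, it remains to show that the algebraic subgroup $Z$ is stable under translation by $T_1\subset G$. From $T=T_1+T'$ and the fact that $T_1\subset \wt{G}$ maps isomorphically onto $T_1\subset G$ under $\tau$, one obtains $\overline{\tau(T^{\an})}^{\mathrm{Zar}}=T_1+Z$; separately $\tau(T^{\an})$ contains $T_1^{\an}$ and surjects onto $A^{\an}$, forcing $\overline{\tau(T^{\an})}^{\mathrm{Zar}}=\pi^{-1}(A)=G$, so $T_1+Z=G$. The main obstacle I expect is promoting $T_1+Z=G$ to $Z=G$, i.e.\ verifying the $T_1$-invariance of $Z$ itself; I would approach this by exploiting the algebraic structure of $\wt\phi\colon \mathbb{G}_m\to T$ together with the splitting $T\cong T_1\oplus T_3$ furnished by Lemma~\ref{thm:PicTAnTrivial}, which should force any translate of $Z$ by a point of $T_1$ to again arise as the Zariski closure of $\tau$ applied to a subtorus with the same image in $T_3$, hence to coincide with~$Z$.
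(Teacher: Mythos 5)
Your strategy is the same as the paper's --- lift to the universal cover, use Lemma \ref{prop:ZCinTorus} and Cherry's algebraicity of analytic maps between tori to see that $\phi(\mathbb{G}_m^{\an})$ is an analytic subgroup whose Zariski closure $Z$ is an algebraic subgroup of $G$ with $\pi(Z)=A$ --- and the obstacle you explicitly flag at the end, namely passing from $T_1+Z=G$ to $Z=G$, is a genuine gap. In fact it cannot be closed, because the lemma as stated is false. Take $G=\mathbb{G}_{m,K}\times E$ with $E$ a Tate elliptic curve, so $T_1=\mathbb{G}_{m,K}$ and $A=E$, let $u\colon \mathbb{G}_{m,K}^{\an}\to E^{\an}$ be the Tate uniformization, and set $\phi(z)=(1,u(z))$ (a degenerate variant of the Example following Proposition \ref{prop:ZCAlgebraicGroup}). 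The composite $\mathbb{G}_m^{\an}\to A^{\an}=E^{\an}$ is surjective, hence Zariski dense, but $\phi(\mathbb{G}_m^{\an})=\{1\}\times E^{\an}$ has Zariski closure $Z=\{1\}\times E\subsetneq G$. Here $T_1+Z=G$ while $Z\neq G$, and $Z$ is visibly not $T_1$-invariant, so the invariance you hope to extract from the splitting $T\cong T_1\oplus T_3$ simply does not hold. (A secondary unjustified step: Zariski density of $\pi\circ\phi$ in $A$ does not force $T'\to T_3$ to be surjective, since a proper subtorus of $T_3$ can already have Zariski-dense image in $A$; but even granting that step, the example above kills the conclusion.)

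You should not feel you missed the intended argument: the paper's proof breaks at exactly the point you isolated. It asserts, with no justification, that the analytic group homomorphism $F'\to A^{\an}$ ``has kernel $T_1^{\an}$'', which is precisely the claim $T_1^{\an}\subseteq F'$ that fails in the example above (there the kernel is trivial). What your argument does correctly establish, up to and including the algebraicity of $Z$, is the weaker statement that the Zariski closure of $\phi(\mathbb{G}_m^{\an})$ is a translate of an algebraic subgroup of $G$ surjecting onto (the relevant abelian subvariety of) $A$ --- and that weaker statement is what Proposition \ref{prop:ZCAlgebraicGroup} actually needs.
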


\begin{proof}
Lemma \ref{prop:ZCinTorus} asserts that $\phi(\mathbb{G}_m^{\an})$ is an analytic subgroup $F'$ of $G^{\an}$, as it is the composition of group homomorphisms $\wt{\phi}$ and the uniformization map, which is an analytic group homomorphism. Since $\mathbb{G}_m^{\an}\to G^{\an} \to A^{\an}$ is Zariski dense, $F'$ dominates $A^{\an}$, and this analytic group homomorphism has kernel $T_1^{\an}$. Moreover, we have the following morphism of short exact sequences of analytic groups:
\[
\begin{tikzcd}
0\to T_1^{\an} \arrow{r}\arrow[equal]{d} & F' \arrow{r}\arrow{d}{f} & A^{\an} \arrow{r}\arrow[equal]{d} & 0 \\
0\to T_1^{\an} \arrow{r} & G^{\an} \arrow{r} & A^{\an} \arrow{r} & 0. 
\end{tikzcd}
\]
By Berkovich analytic GAGA \cite[Corollary 3.4.10]{BerkovichSpectral}, $\Pic(A^{\an})$ is in bijective correspondence with $\Pic(A)$, which implies that $F'$ is in fact an algebraic subgroup of $G^{\an}$.
Moreover,  the short five lemma tells us that the morphism $f$ must be an isomorphism.
\end{proof}

\begin{proposition}\label{prop:ZCAlgebraicGroup}
Let $\phi\colon \mathbb{G}_m^{\an}\to G^{\an}$ be a morphism.   Then, the Zariski closure of $\phi(\mathbb{G}_m^{\an})$ in $G^{\an}$ is the analytification of the translate of  of an algebraic subgroup of $G$. 
\end{proposition}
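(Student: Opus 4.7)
The plan is to reduce to Lemma \ref{prop:absimple} by cutting $G$ down to the preimage of the Zariski closure of $\bar\phi(\mathbb{G}_m^{\an})$ in the abelian quotient. Writing $p\colon G\to A$ for the projection and translating so that $\phi(1)=e_G$, I would first show that the Zariski closure of $\bar\phi(\mathbb{G}_m^{\an})$ in $A^{\an}$, where $\bar\phi \colonequals p\circ\phi$, is the analytification of an abelian subvariety $A'\subseteq A$. Then setting $G' \colonequals p^{-1}(A')\subseteq G$, which is a semi-abelian subvariety of $G$ (an extension of $A'$ by $T_1$), the morphism $\phi$ factors through $G'^{\an}$, and the composition $\mathbb{G}_m^{\an}\to G'^{\an}\to A'^{\an}$ has Zariski dense image by the very choice of $A'$. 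Applying Lemma \ref{prop:absimple} to $\phi\colon \mathbb{G}_m^{\an}\to G'^{\an}$ (viewed as a morphism into the semi-abelian variety $G'$ with abelian quotient $A'$) then yields that $\phi(\mathbb{G}_m^{\an})$ is Zariski dense in $G'^{\an}$, so the Zariski closure of $\phi(\mathbb{G}_m^{\an})$ in $G^{\an}$ equals $G'^{\an}$, the analytification of an algebraic subgroup of $G$. Un-translating gives the required translate.

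\textbf{Producing $A'$.} To construct $A'$, I would apply Lemma \ref{prop:ZCinTorus} to $\bar\phi$: the lift $\widetilde{\bar\phi}\colon \mathbb{G}_m^{\an}\to H^{\an}$ (to the semi-abelian uniformization $H$ of $A^{\an}$, as in that proof) factors through the toric part $T_2^{\an}$ of $H$. By Cherry's \cite[Proposition 3.4]{Cherry}, the morphism $\widetilde{\bar\phi}\colon \mathbb{G}_m^{\an}\to T_2^{\an}$ is algebraic, and since the earlier translation ensures that it sends $1$ to $e$, it is in fact a group homomorphism; in particular, its image is a $1$-parameter algebraic subgroup of $T_2$. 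Pushing down through the uniformization $H^{\an}\to A^{\an}$, the image $\bar\phi(\mathbb{G}_m^{\an})$ becomes a (not necessarily closed) subgroup of $A^{\an}$, and a short argument using that multiplication and inversion are morphisms shows its Zariski closure is a closed analytic subgroup. Berkovich GAGA for the proper variety $A$ (as invoked in Lemma \ref{prop:absimple}) then identifies this closure with a closed algebraic subgroup of $A$; being irreducible and containing $e$, it is an abelian subvariety $A'$.

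\textbf{Main obstacle.} The heart of the argument is establishing that $A'$ is genuinely an abelian subvariety rather than merely a closed subvariety of $A$. This requires showing that $\bar\phi(\mathbb{G}_m^{\an})$ is itself a subgroup of $A^{\an}$, which in turn depends on the interplay between Lemma \ref{prop:ZCinTorus} (forcing the lift into the toric part of the universal cover of $A^{\an}$) and Cherry's algebraicity theorem for analytic maps between split tori (which promotes the identity-preserving analytic lift into an algebraic group homomorphism). Once this abelian-case step is in place, the reduction to Lemma \ref{prop:absimple} through $G'$ is immediate.
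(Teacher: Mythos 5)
Your proposal is correct and follows essentially the same route as the paper: both reduce to Lemma \ref{prop:absimple} by showing the Zariski closure of the image in $A^{\an}$ is an abelian subvariety and then passing to its preimage in $G$. The only difference is that you spell out the construction of that abelian subvariety (via the lift to the toric part of the uniformization of $A^{\an}$ and Cherry's algebraicity result), whereas the paper simply cites the proof of \cite[Theorem 3.6]{Cherry} for this step.
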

\begin{proof} Let $\psi\colon\mathbb{G}_m^{\an}\to A^{\an}$ be the composition of $\phi$ and the surjective homomorphism $G^{\an}\to A^{\an}$. By Lemma \ref{prop:ZCinTorus}, the image $\phi(\mathbb{G}_m^{\an})$ is an analytic subgroup of $G^{\an}$. Therefore, the image $\psi(\mathbb{G}_m^{\an})$ is an analytic subgroup of $A^{\an}$. Thus,  the Zariski closure of the image of   $\psi$ is  an abelian subvariety $E^{\an}$ of $A^{\an}$ (see \cite[Proof of Theorem 3.6]{Cherry}). 

Now, let $F$ be the preimage of $E$ inside $G$, and note that $F$ is a semi-abelian variety (as it is a closed subgroup of $G$). Clearly, the image of the morphism $\phi\colon\mathbb{G}_m^{\an}\to G^{\an}$ is contained in $F^{\an}$. Now, by construction, the image of the composed morphism $\mathbb{G}_m^{\an}\to F^{\an}\to E^{\an}$ is Zariski dense in $E$. Therefore, by Lemma \ref{prop:absimple}, the image of $\mathbb{G}_m^{\an}$ in $F^{\an}$ is the analytification of  the translate of an algebraic subgroup of $F$. In particular, it is the analytification of the translate of an algebraic subgroup of $G$.
\end{proof}

The following example shows that the image of an algebraic group under an analytic homomorphism is not necessarily an algebraic subgroup.

\begin{example}
Let $E/K$ be an elliptic curve with multiplicative reduction and let $\phi\colon\mathbb{G}_{m,K}^{\an}\to E^{\an}$ be the universal covering of $E^{\an}$. 
Consider the semi-abelian variety  $G= \mathbb{G}_{m,K} \times E $. 
Let $\mathbb{G}_{m,K}^{\an}\to \mathbb{G}_{m,K}^{\an}\times E^{\an}$ be the morphism defined by $z\mapsto (z,\phi(z))$. The image of this morphism is \textit{not} an algebraic subgroup of $G^{\an}$. However, its Zariski closure equals $G^{\an}$.
\end{example}

To end this section, we prove Theorem \ref{thm:GGLSemiAb}.

\begin{proof}[Proof of Theorem \ref{thm:GGLSemiAb}] 
Proposition \ref{prop:ZCAlgebraicGroup} tells us that the Zariski closure of every analytic morphism $\mG_m^{\an} \to X^{\an} \subset G^{\an}$ is contained in $\Sp(X)^{\an}$. 
To conclude the proof, it suffices to show that for every abelian variety $A$ over $K$ and every dense open subset $U\subset A$ with $\mathrm{codim}(A\setminus U)\geq 2$, we have that every non-constant morphism $U \to X$ of schemes factors over $\Sp(X)$. 
By Remark \ref{remark:mochi}, every morphism $U \to X \subset G$ extends to a morphism $A \to X \subset G$. 
Now, by \cite[Theorem 2]{IitakaLogForms},    any morphism between semi-abelian varieties is the composition of a group homomorphism and a translation,   so that the image of $A \to X \subset G$ factors over $\Sp(X)$, as desired.
\end{proof}

  \bibliography{refs}{}
\bibliographystyle{alpha}

 \end{document}